\theoremstyle{plain}
\newtheorem{theorem}{Theorem}[section]
\newtheorem{corollary}[theorem]{Corollary}
\newtheorem{remark}[theorem]{Remark}
\numberwithin{theorem}{section}
\numberwithin{equation}{section}
\newcommand{\average}{{\mathchoice {\kern1ex\vcenter{\hrule height.4pt
width 6pt depth0pt} \kern-9.7pt} {\kern1ex\vcenter{\hrule
height.4pt width 4.3pt depth0pt} \kern-7pt} {} {} }}
\def\R{\mathbb{R}}
\renewcommand{\a }{\alpha }
\renewcommand{\b }{\beta }
\renewcommand{\d}{\delta }
\newcommand{\D }{\Delta }
\newcommand{\e }{\varepsilon }
\newcommand{\g }{\gamma}
\newcommand{\n }{\nabla }
\newcommand{\vp }{\varphi }
\newcommand{\s }{\sigma }
\renewcommand{\t }{\tau }
\renewcommand{\O }{\Omega }
\newcommand{\ov}{\overline}
\newcommand{\be}{\begin{equation}}
\newcommand{\ee}{\end{equation}}
\newcommand{\de}{\partial}
\newcommand{\N}{\mathbb{N}}
\newcommand{\dist}{{\rm dist}}
\newcommand{\eps}{\varepsilon}
\renewcommand{\epsilon}{\varepsilon}
\newcommand{\Ds}{ (-\D)^s}
\begin{document}
\title{Gradient estimates in fractional Dirichlet problems}
 
\author[Mouhamed Moustapha Fall]
{Mouhamed Moustapha Fall}
\address{African Institute for Mathematical Sciences (AIMS) Senegal. KM2 Route de Joal Mbour. BP 1418, Senegal}
\email{mouhamed.m.fall@aims-senegal.org}

\author[Sven Jarohs]
{Sven Jarohs}
\address{Institut f\"ur Mathematik,
Goethe-Universit\"at Frankfurt.
Robert-Mayer-Str. 10
D-60629 Frankfurt am Main, Germany}
\email{jarohs@math.uni-frankfurt.de}

\thanks{M.M. Fall's work is supported by the Alexander
von Humboldt foundation. Part of this work was done while M.M. Fall was visiting
the Goethe University in Frankfurt am Main during   July-August 2019 and he thanks the Mathematics department for their kind hospitality. The authors are grateful to  Tobias Weth and Xavier Ros-Oton   for    many useful discussions.  We also thank Xavier Ros-Oton for handing us \cite{AR19}.  }


 \begin{abstract}
   \noindent
We obtain some fine gradient estimates near the boundary for solutions to fractional elliptic problems subject to exterior Dirichlet boundary conditions. Our results provide, in particular, the sign of the normal derivative of such solutions near the boundary of the underlying domain.
 \end{abstract}

\maketitle

\section{Introduction}\label{s:Int}
Let $\O$  be an open bounded subset of $\R^N$ with $C^{1,1}$ boundary and let $s\in (0,1)$. In this paper we analyze the boundary behavior of distributional solutions $u\in C^s(\R^N) $   to the equation
\be\label{eq:main}
\Ds u=f(x,u) \quad\textrm{ in $\O$},  \qquad u=0 \quad\textrm{ in $\R^N\setminus \O$},
\ee
where $f\in L^\infty_{loc}(\R^N\times \R)$ and  $(-\Delta)^s$ denotes the fractional Laplacian and is defined for $\vp\in C^\infty_c(\R^N)$ by  
\[
(-\Delta)^s \vp (x):=c_{N,s}\lim_{\eps\to 0^+}\int_{\R^N\setminus B_{\eps}(0)}\frac{\vp (x)-\vp (x+y)}{|y|^{N+2s}}\ dy,\quad x\in \R^N
\]
with a normalization constant $c_{N,s}=\frac{s4^s\Gamma(\frac{N}{2}+s)}{\pi^{N/2}\Gamma(1-s)}$. 
Here and in the following, we assume,    in the case $s\in (0,1/2]$ that, for some   $\s\in ( 1-2s,1)$ and for all $M>0$, there exists a constant $A_M$ such that 
\be\label{eq:Assump-entries}
\sup_{t\in (-M,M)}[f(\cdot,t)]_{C^{0,\s}(\ov\O)}+  \sup_{x\in \O }[f(x,\cdot )]_{C^{0,1}[-M,M]}\leq A_M.
\ee
We note that, under the assumptions on $f$, the solution $u$ to \eqref{eq:main} belongs to $ C^{1}_{loc}(\O)$ by the interior regularity theory, see e.g. \cite{RS16b}.

\medskip

To study the boundary behavior of $u$, we consider a function  $\d$, which coincides with the distance function $ \dist(\cdot,\R^{N}\setminus \Omega)$ in a neighborhood of $\de\O$ and in $\R^N\setminus\O$. Moreover, we suppose that $\d$ is positive in $\O$ and  $\d \in C^{1,1}(\ov\O)$.

\medskip

Letting  $\psi={u}/{\d^s}$,  the known boundary regularity theory for fractional elliptic equations  (see e.g.  Ros-Oton and Serra \cite{RS-2} followed by \cite{RS16a,RS16b,Fall-reg-1, RS-1})  states that, for any  $\a\in(0,s)$,
\be \label{eq:estim-psi-intro}
\|\psi\|_{C^\a(\ov \O)}\leq C \sup_{x\in\O } |f(x,u(x))|,
\ee
where $C=C(N,s,\a,\O)$ is a positive constant.   
Moreover,  since $u= \d^s\psi $, we have that
\be \label{eq:Class-estim}
\d^{1-s}(x) \n u (x)=s\psi(x)\n\d(x)+\d(x)\n \psi(x)  \qquad\textrm{   for  all $x\in  \O$.} 
\ee
However, the identity \eqref{eq:Class-estim} and the estimate in  \eqref{eq:estim-psi-intro}   do not provide a fine asymptotic  of $  \n  u(x)$ near $\de\O$, since one cannot deduce from  \eqref{eq:estim-psi-intro}  a pointwise estimate of $\n \psi$ near $\de\O$. In particular, the monotonicity of $u$ in the normal direction near the boundary is in general not known and cannot be deduced from the fractional Hopf lemma, which provides only the sign of $\psi$ on $\de\O$, see e.g. \cite{FJ15}. The purpose of the present paper is to investigate these questions and we show  that, for some $\b>0$,
\be\label{eq:to-get} 
\d^{1-s} \n u\in {C^\b(\ov\O)}  \qquad \textrm{and}  \qquad \d^{1-s}(x) \n u(x)\cdot\n \d(x)=s\psi(x)  \qquad\textrm{   for all $x\in \de \O$.} 
\ee

We emphasize that under the assumptions on $f$,   \eqref{eq:to-get}  does not  follow  from the known boundary regularity theory for fractional elliptic equations even if $\O$ is of class $C^\infty$. Indeed, by  the  results of Grubb  \cite{Grubb1}, we have that $ \psi \in C^{\a}(\ov\O)$ for all $\alpha\in (0,s)$ and also if $2s\leq 1$ then by \eqref{eq:Assump-entries},      $\psi\in C^{s+\min (s,\s)}(\ov\O)$, provided $s+\min (s,\s)\not\in \N$. Clearly,  each of these H\"older regularity on $\psi$   does not  imply a pointwise  estimate of $\n \psi$ and cannot imply \eqref{eq:to-get}.

Our first main result is the following.
\begin{theorem}\label{th:Gradient-estimate-2s-geq1}
Let $N\geq 1$, $s\in (1/2,1)$, $\a\in (0,1)$ and  $\O\subset \R^N$ be an open bounded set of class $C^{1,\g}$, with $\g>s$. Let $u \in C^s(\R^N) $ and $g\in L^\infty(\R^N)$  be such that 
\be\label{eq:main-2s-geq1}
\Ds u=g \quad\textrm{ in $\O$},  \qquad u=0 \quad\textrm{ in $\R^N\setminus \O$}.
\ee
Let $\psi={u}/{\d^s}$ satisfy 
\be \label{eq:estim-psi-intro-2sgeq1}
\|\psi\|_{C^\a(\ov \O)}\leq C_0 \left( \|g\|_{L^\infty(\R^N)}+ \|u\|_{L^\infty(\O)}\right),
\ee
for some constant $C_0>0$.
  Then  provided $\a\not=s$, we have
\be  \label{eq:est-1-psi-intro}
|\n \psi(x)|\leq C  \d^{\min(\a,s)-1}(x)\left( \|g\|_{L^\infty(\R^N)}+ \|u\|_{L^\infty(\O)}\right) \qquad\textrm{ for almost all $x\in \O$.}
\ee
  If moreover $\O$ is of class $C^{1,1}$,  then    for all $\b\in (0,\min(\a,2s-1))$,  
\be \label{eq:cont-weight-grad-intro}
\|\d^{1-s} \n u\|_{   {C^\b(\ov\O)}    }\leq C   \left( \|g\|_{L^\infty(\R^N)}+ \|u\|_{L^\infty(\O)}\right)
\ee
and 
\be\label{eq:equlaity-norm-deriv}
\d^{1-s}(x) \n u(x)\cdot\n \d(x)=s\psi(x)  \qquad\textrm{   for all $x\in \de \O$.}  
\ee
Here,    $C=C(\O,N,s,\a,\g, \b,C_0)$.
\end{theorem}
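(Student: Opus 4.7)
The plan is to prove the three conclusions of the theorem in the natural order: first, the pointwise interior gradient estimate \eqref{eq:est-1-psi-intro} on $\psi$ via a rescaled interior Schauder estimate for the fractional Laplacian; second, upgrade it to the global Hölder bound \eqref{eq:cont-weight-grad-intro} by combining the pointwise estimate with interior $C^{1,\eta}$ regularity; third, read off the boundary identity \eqref{eq:equlaity-norm-deriv} as a trace.

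For the pointwise estimate, fix $x_0\in\Omega$ close to $\partial\Omega$ with $d:=\delta(x_0)$. Since $\delta\in C^{1,1}(\ov\O)$, in $B_{d/2}(x_0)\subset\Omega$ the function $\delta^s$ is smooth with $\nabla\delta^s(x_0)=sd^{s-1}\nabla\delta(x_0)$. Rescale by $v(y):=d^{-s}u(x_0+(d/4)y)$, so that $(-\D)^s v$ is bounded by $Cd^s\|g\|_{L^\infty}$ on $B_1$, while $\|v\|_{L^\infty(B_1)}$ and the weighted tail $\int_{\R^N}|v(y)|(1+|y|)^{-N-2s}dy$ are controlled using $|u|\le\|\psi\|_\infty\delta^s$ and $\delta(x_0+(d/4)y)\le d(1+|y|/4)$. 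Since $2s>1$, the interior $C^{1,2s-1-\eps}$ regularity for $(-\D)^s$ on $B_{1/2}$ controls $\nabla v(0)=(d/4)d^{-s}\nabla u(x_0)$ by these quantities. The crucial refinement is to apply this interior estimate not to $v$ itself but to the difference
\[
v(y)-\psi(x_0)\bigl(\delta(x_0+(d/4)y)/d\bigr)^s,
\]
which, by \eqref{eq:estim-psi-intro-2sgeq1}, is $O(d^\alpha|y|^\alpha)$ on $B_1$ and in the weighted tail. This yields
\[
d^{1-s}\bigl|\nabla u(x_0)-\nabla(\psi(x_0)\delta^s)(x_0)\bigr|\le Cd^{\min(\alpha,s)}\bigl(\|g\|_{L^\infty}+\|u\|_{L^\infty}\bigr),
\]
and combined with the pointwise identity $\nabla u=s\delta^{s-1}\psi\nabla\delta+\delta^s\nabla\psi$ and $\nabla(\psi(x_0)\delta^s)(x_0)=s\psi(x_0)d^{s-1}\nabla\delta(x_0)$, this rearranges to the desired bound $|\nabla\psi(x_0)|\le Cd^{\min(\alpha,s)-1}(\|g\|_{L^\infty}+\|u\|_{L^\infty})$. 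I expect the main technical obstacle to be exactly this subtraction step: because $\delta^s$ is not $s$-harmonic on $\O$, one needs a careful bound on $(-\D)^s[(\delta/d)^s]$ in the rescaled picture, which is where the $C^{1,1}$ (or $C^{1,\g}$) regularity of $\partial\O$ and the fact that $(x_N)_+^s$ is $s$-harmonic on the half-space enter; the error comes from the curvature of $\partial\O$ and from localization.

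For \eqref{eq:cont-weight-grad-intro}, decompose $\delta^{1-s}\nabla u=s\psi\nabla\delta+\delta\nabla\psi$. The first summand lies in $C^{\min(\alpha,1)}(\ov\O)$ since $\psi\in C^\alpha(\ov\O)$ and $\nabla\delta\in C^{0,1}(\ov\O)$ (using $\O\in C^{1,1}$). The second summand extends continuously by zero to $\partial\O$ by the pointwise bound $|\delta\nabla\psi|\le C\delta^{\min(\alpha,s)}$ just proved, and its Hölder regularity up to the boundary follows from a standard two-scale argument for $x_1,x_2\in\O$ with $r_i:=\delta(x_i)$: in the far regime $|x_1-x_2|\ge\tfrac12\max(r_1,r_2)$ the pointwise bound from Step 1 is applied directly, while in the near regime one uses the same rescaling device, now applied to higher-order differences, to obtain interior $C^{1,\eta}$ estimates on $\psi$ on balls of size $\sim r_i$. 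Balancing the resulting exponents produces Hölder continuity with any $\beta<\min(\alpha,2s-1)$, giving \eqref{eq:cont-weight-grad-intro}.

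Finally, the boundary identity \eqref{eq:equlaity-norm-deriv} is immediate: taking the dot product of $\delta^{1-s}\nabla u=s\psi\nabla\delta+\delta\nabla\psi$ with $\nabla\delta$ in $\O$ and letting $x\to\bar x\in\partial\O$, the term $\delta\nabla\psi\cdot\nabla\delta$ vanishes by Step 1, while $|\nabla\delta|=1$ on $\partial\O$, so $\delta^{1-s}\nabla u\cdot\nabla\delta=s\psi$ on $\partial\O$.
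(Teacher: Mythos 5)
Your proposal follows essentially the same route as the paper: rescale to unit scale at $x_0$, subtract $\psi(x_0)\delta^s$ from $u$, control the resulting function uniformly and in the weighted tail norm by $O(d^{s+\min(\alpha,s)})$ using \eqref{eq:estim-psi-intro-2sgeq1}, apply the interior $C^{1,\beta}$ estimate (with $\beta<2s-1$) to the rescaled equation, and read off $\nabla$ at the origin; the paper's subtracted function $v_x(z)=u(x+z\delta(x))-\delta^s(x+z\delta(x))\psi(x)$ is the same object up to the harmless normalization $d^{-s}$ and $d/4$ vs $d$. The one place you correctly flag as an obstacle but leave unresolved — bounding $(-\D)^s\delta^s$ in the rescaled picture — is exactly where the hypothesis $\O\in C^{1,\gamma}$ with $\gamma>s$ is used: the paper cites \cite[Proposition 2.6]{RS-1}, which gives $|(-\D)^s\delta^s|\le C$ uniformly in $\O$ under precisely this assumption, so that after rescaling this term contributes only $\delta^{2s}(x)\lesssim\delta^{s+\min(s,\alpha)}(x)$. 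With that fact supplied, your pointwise estimate, your two-scale upgrade to $C^\beta(\ov\O)$ of $\delta^{1-s}\nabla u$ (the paper does the near regime by taking the $C^\beta$ seminorm of $w_x(y)=\delta^{1-s}(y)(\nabla u(y)-\psi(x)\nabla\delta^s(y))$ on $B(x,\delta(x)/4)$ and then patching as in \cite[Prop.\ 1.1]{RS-2}), and your trace argument for \eqref{eq:equlaity-norm-deriv} all match the paper's argument.
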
 

 We recall that by \cite{RS16b,AR19},  if $\O$ is  of class $C^{2,\e}$  and $g\in C^\e(\R^N)$, for some $\e>0$,  then  \eqref{eq:estim-psi-intro-2sgeq1} holds for some $\a>s$. In this case,  $\d^{\min(\a,s)-1}(x)$ in \eqref{eq:est-1-psi-intro} can be replaced with $\d^{s-1}(x)$. \\

To state our next results, we will consider a function $U\in C^s(\R^N)\cap C^1_{loc}(\O)$ satisfying
\be\label{eq:torsion}
\Ds U\in C^\s(\ov\O) 
\ee
and 
\be\label{eq:estimU}
  c \d^s(x)\leq U(x) \leq \frac{1}{c} \d^s(x) \qquad\textrm{ for all $x\in \R^N$},
\ee
for some positive constant $c$.  

\medskip

Our second main result is the following.
\begin{theorem}\label{th:Gradient-estimate-gen}
Let $N\geq 1$, $s\in (0,1/2]$, and  $\O\subset \R^N$ be an open bounded set  of class $C^{1,1}$. Let $u \in C^s(\R^N) $ be a solution to \eqref{eq:main}, where $f$ satisfies \eqref{eq:Assump-entries} and let $U$ satisfy \eqref{eq:torsion} and \eqref{eq:estimU}.    Suppose that  $u$  satisfies    \eqref{eq:estim-psi-intro}, for some $\a\in (0,1)$ with   $\a\not=s$.
Let $\Psi:=\frac{U}{\d^s}$ and suppose that $\Psi\in {C^\a(\ov \O)} $.  Then   the following statements holds.
\begin{enumerate}
\item[(i)] We have  
\be \label{eq:grad-estim-gen}
|\n  \psi(x)|\leq   C\left( \d^{\min(s,\a )-1}(x)+  |\n  \Psi(x)| \right)  \qquad\textrm{ for all $x\in \O$.}
\ee
\item[(ii)]  If    $ \d^{1-s} \n U\in C^{\g}(\ov \O) $, for some $\g>0$, then     for all $\b\in (0, \min\{\g,\a ,s , \s-1+2s \}],$  
\be \label{eq:normal-Hold-intro}
\|\d^{1-s} \n u\|_{   {C^{\b}(\ov\O)}    }\leq C  .
\ee
\end{enumerate}
For $M:=\|u\|_{L^\infty(\R^N)}$,   the constant $C$ above depends only on   $N$, $s$, $\b$, $\O$,  $\g$,  $\s$, $\a$, $A_M$, $U$ and $\|f\|_{L^\infty({\O}\times[-M,M])}$.

\end{theorem}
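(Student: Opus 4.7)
The plan is to write $u = \lambda U + V$, where the scalar $\lambda$ is chosen (depending on a nearby boundary point) so that the residual $V$ enjoys strictly better decay than $u$ near that point. Concretely, for $x\in\O$ close to $\de\O$ let $x_0=\pi_{\de\O}(x)$ be the nearest boundary point, so $|x-x_0|=\d(x)$, and set
\[
\lambda = \lambda(x_0) := \frac{\psi(x_0)}{\Psi(x_0)}, \qquad V := u - \lambda U.
\]
Since $\Psi=U/\d^s\ge c>0$ on $\ov\O$ by \eqref{eq:estimU}, the number $\lambda(x_0)$ is uniformly bounded. By construction $V/\d^s=\psi-\lambda\Psi\in C^\a(\ov\O)$ vanishes at $x_0$, giving
\[
|V(y)|\le C\,\d^s(y)\,|y-x_0|^\a\qquad\text{for all } y\in\ov\O.
\]
Using \eqref{eq:Assump-entries}, \eqref{eq:torsion} and the $C^s$-regularity of $u$, one verifies that $f(\cdot,u)\in C^\s(\ov\O)$, hence $\Ds V=f(\cdot,u)-\lambda\,\Ds U\in C^\s(\ov\O)\cap L^\infty(\R^N)$.

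For (i), I apply to $V$ the scale-invariant interior gradient estimate for $\Ds$ on the ball $B_{\d(x)/2}(x)\subset\O$:
\[
\d(x)|\n V(x)|\le C\!\left(\sup_{B_{\d(x)/2}(x)}|V|+\d(x)^{2s}\!\!\int_{\R^N\setminus B_{\d(x)/2}(x)}\!\frac{|V(y)|}{|y-x|^{N+2s}}\,dy+\d(x)^{2s}\|\Ds V\|_\infty\right).
\]
The local supremum contributes $C\d(x)^{s+\a}$, the source term is $O(\d(x)^{2s})$, and the nonlocal tail is estimated by splitting the domain into a near zone $\{|y-x|\le\rho_0\}$ (where $\d(y)\lesssim|y-x|$ and $|y-x_0|\lesssim|y-x|$, giving $|V(y)|\le C|y-x|^{s+\a}$) and a far zone (where $|V|$ is merely globally bounded). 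Reassembling --- and using $\a\ne s$ to avoid a logarithmic contribution from the near-zone integration --- yields
\[
|\n V(x)|\le C\,\d(x)^{\min(s+\a,\,2s)-1}.
\]
Differentiating $\psi=\lambda\Psi+V/\d^s$ then produces
\[
\n\psi(x)=\lambda\n\Psi(x)+\d^{-s}(x)\n V(x)-s(\psi-\lambda\Psi)(x)\,\d^{-1}(x)\n\d(x),
\]
and plugging in the bound on $\n V$ together with $|(\psi-\lambda\Psi)(x)|\le C\d(x)^\a$ gives \eqref{eq:grad-estim-gen}.

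For (ii), by \eqref{eq:Class-estim} one has $\d^{1-s}\n u=s\psi\n\d+\d\n\psi$, which extends continuously to $\ov\O$ with boundary value $s\psi\n\d\in C^\a(\de\O)$. The H\"older estimate on $\ov\O$ is then obtained by the standard boundary/interior dichotomy on pairs $x,y\in\ov\O$. When $|x-y|\ge\d(x)/4$, I compare $\d^{1-s}(x)\n u(x)$ and $\d^{1-s}(y)\n u(y)$ to their respective boundary values, estimating the residual $\d^{1-s}\n V$ via $|\d^{1-s}(x)\n V(x)|\le C\d(x)^{\min(s+\a,2s)-s}$ (from (i)) and using the hypothesis $\d^{1-s}\n U\in C^\g(\ov\O)$; this produces the exponent $\min\{\g,\a,s\}$. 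When $|x-y|<\d(x)/4$, I apply the interior $C^{2s+\s}$ estimate to $V$ (available since $\Ds V\in C^\s(\ov\O)$) on $B_{\d(x)/8}(x)$ to obtain $[\n V]_{C^{2s+\s-1}}\le C\d(x)^{\min(s+\a,2s)-2s-\s}$, and combine with the bound $|\d^{1-s}(x)-\d^{1-s}(y)|\le C\d(x)^{-s}|x-y|$ valid in this regime. Balancing the two types of estimate yields \eqref{eq:normal-Hold-intro} for every $\b\le\min\{\g,\a,s,\s-1+2s\}$.

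The principal technical difficulty will be the careful handling of the nonlocal tail in the interior estimate for $V$, where the improved decay $|V|\lesssim\d^s|\cdot-x_0|^\a$ is available only locally near $x_0$ while $V$ is merely globally bounded; the near/far splitting is essential, and it is precisely the exclusion $\a\ne s$ that prevents a logarithmic blow-up. A secondary subtlety is that the multiplier $\lambda=\lambda(x_0)$ depends on the boundary projection and so varies with $x$; in the H\"older estimate one must verify that when $x,y$ have distinct projections the difference $\lambda(x_0)-\lambda(y_0)$ contributes only a lower-order term, using the $C^\a$-regularity of $\psi$ and $\Psi$.
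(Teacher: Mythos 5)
Your decomposition idea is the same as the paper's: subtract a multiple $\lambda U$ of the reference barrier so that the residual $V$ vanishes to order $s+\a$ near the boundary, then apply interior regularity on balls of radius $\sim\d(x)$. The choice of multiplier differs slightly --- you evaluate $\psi/\Psi$ at the boundary projection $x_0$, while the paper evaluates it at the point $x$ itself (equivalently, works with $\ov\psi=u/U$ and subtracts $\ov\psi(x)U$) --- but both devices produce the same decay $|V|\lesssim\d^{s+\min(\a,s)}$ on the relevant ball.

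There is, however, a genuine gap that makes your single-pass argument fail precisely in the regime $s\le 1/2$ that this theorem addresses. You assert that $f(\cdot,u)\in C^\s(\ov\O)$. By \eqref{eq:Assump-entries} $f$ is $C^\s$ in the spatial variable and Lipschitz in the second, and $u\in C^s$, so the composition is only $C^{\min(s,\s)}(\ov\O)$. When $\s>s$ --- which is forced for all $s\le 1/3$, since then $\s>1-2s\ge s$ --- you have $f(\cdot,u)\in C^s$ only, and one application of the interior Schauder estimate yields $V\in C^{2s+s}=C^{3s}$ locally, which for $s\le 1/3$ does not reach $C^1$ at all. Consequently your bound on $\n V$, and everything downstream of it, is not established. (Relatedly, the ``scale-invariant interior gradient estimate'' you write with $\|\Ds V\|_\infty$ on the right is the $L^\infty\to C^1$ estimate, which simply does not hold for $2s\le 1$; only $C^\tau\to C^{2s+\tau}$ is available.) The device you are missing --- and which is the technical heart of the paper's proof --- is a bootstrap: since $\Ds V=f(\cdot,\,V+\lambda U)-\lambda\Ds U$ depends on $V$ through $f$, each gain in the H\"older regularity of $V$ improves the H\"older regularity of $\Ds V$, and one iterates on a shrinking sequence of balls $B_{r_i}$ until the source term regularity saturates at $C^\s$ and one reaches $V\in C^{1,\s-1+2s}$ (which needs $\s>1-2s$, i.e. \eqref{eq:Assump-entries}). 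Only then does the pointwise bound $|\n V(x)|\le C\d(x)^{s+\min(\a,s)-1}$ follow, and the rest of your argument (the algebra for $\n\psi$, and the boundary/interior dichotomy in part (ii)) is then in order. The same bootstrap must also be inserted before the $C^{2s+\s}$ estimate you invoke in part (ii), since it rests on the same unjustified $\Ds V\in C^\s$ claim.
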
 

As an example of a function $U\in C^s(\R^N)\cap C^{1}_{loc}(\O)$ satisfying  \eqref{eq:torsion} and \eqref{eq:estimU}, we can consider   the solution to 
\be\label{eq:torsion-0}
 \Ds U=1\qquad\textrm{in $\O$} \qquad\textrm{ and }\qquad U=0  \qquad\textrm{in $\R^N\setminus \O$.}
\ee
Here,  by  \cite{Grubb1}, if $\O$ is of class $C^\infty$ then $\Psi=U/\d^s\in C^\infty(\ov\O)$.    Therefore, by combining \eqref{eq:Class-estim},   \eqref{eq:normal-Hold-intro} and  \eqref{eq:grad-estim-gen}, we get     \eqref{eq:to-get}.

\begin{remark}\label{rem:Ok-aniso}
We notice that the results in Theorem \ref{th:Gradient-estimate-2s-geq1} and  Theorem \ref{th:Gradient-estimate-gen} remain valid if we replace $\Ds$ with the \textit{anisotropic fractional Laplacian} $\Ds_a$ with $a\in L^\infty(S^{N-1})$ if $2s\leq 1$ and $a\in C^\s(S^{N-1})$  for some $\s>1-2s$ if $2s\leq 1$. Here, the anisotropic fractional Laplacian is defined for   $\vp \in C^\infty_c(\R^N)$ as
\[
\Ds_a\vp (x):=\lim_{\eps\to0^+}\int_{\R^N\setminus B_\epsilon(0)}\Big(\vp (x)-\vp (x+y)\Big)\frac{a(y/|y|)}{|y|^{N+2s}}\ dy,\quad x\in \R^N.
\] 
In these cases, by \cite{RS16a},  the interior and boundary regularity  that is needed in the proofs in Section \ref{eq:Proofs} below remains valid.
\end{remark}
Our next result is a consequence of Theorem \ref{th:Gradient-estimate-gen} and the recent results in \cite{AR19} where the authors show the existence of a function satisfying  \eqref{eq:torsion} and \eqref{eq:estimU} in $C^{1,1}$ domains.
\begin{corollary} \label{cor:sleq12}
Let $N\geq 1$, $s\in (0,1/2]$, and  $\O\subset \R^N$ be an open bounded set  of class $C^{1,1}$. Let $u \in C^s(\R^N) $ be a solution to \eqref{eq:main} satisfying  \eqref{eq:estim-psi-intro}, where $f$  satisfies \eqref{eq:Assump-entries}. Provided $\a\not=s$, the following statements hold.
\begin{enumerate}
\item[(i)] We have  
\be\label{eq:estim-Grad-ok}
|\n  \psi(x)|\leq   C \d^{\min(s,\a )-1}(x)    \qquad\textrm{ for all $x\in \O$.}
\ee
\item[(ii)] For all $\b\in (0, \min\{\a ,s , \s-1+2s \}],$  
\be \label{eq:normal-Hold-intro2}
\|\d^{1-s} \n u\|_{   {C^{\b}(\ov\O)}    }\leq C  
\ee
and 
\be\label{eq:norm-deriv-sleq12}
\d^{1-s}(x) \n u(x)\cdot\n\d(x)=s\psi(x) \qquad\textrm{ for all  $x\in \de\O$.} 
\ee
\end{enumerate}
Here, for $M:=\|u\|_{L^\infty(\R^N)}$,   the constant $C$ above depends only on   $N$, $s$, $\b$, $\O$,    $\s$, $\a$, $A_M$  and $\|f\|_{L^\infty({\O}\times[-M,M])}$.
\end{corollary}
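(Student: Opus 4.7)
The strategy is to feed Theorem \ref{th:Gradient-estimate-gen} with an auxiliary function $U$ produced by the results of \cite{AR19}, so that the hypotheses on $U$ and $\Psi = U/\d^s$ are satisfied in $C^{1,1}$ domains. Concretely, \cite{AR19} supplies a function $U\in C^s(\R^N)\cap C^1_{loc}(\Omega)$ that vanishes outside $\O$, satisfies the two-sided bound \eqref{eq:estimU}, has $\Ds U\in C^\s(\ov\O)$, has $\Psi := U/\d^s\in C^\a(\ov\O)$ (in fact even better), and such that $\d^{1-s}\n U\in C^\g(\ov\O)$ for some $\g>0$. Thus all the structural hypotheses of Theorem \ref{th:Gradient-estimate-gen} are in force, and its conclusions transfer immediately to $u$.

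Part (i) then follows by plugging the estimate $|\n\Psi(x)|\leq C$, which holds everywhere in $\O$ because $\Psi$ is $C^{1}$-smooth up to the boundary by the cited result, into \eqref{eq:grad-estim-gen}: the term $|\n\Psi(x)|$ is dominated by $\d^{\min(s,\a)-1}(x)$ as $\d\to 0$, so \eqref{eq:estim-Grad-ok} follows. Part (ii), the Hölder estimate \eqref{eq:normal-Hold-intro2}, is exactly \eqref{eq:normal-Hold-intro}, obtained directly from Theorem \ref{th:Gradient-estimate-gen}(ii).

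For the boundary identity \eqref{eq:norm-deriv-sleq12}, I would argue as follows. From \eqref{eq:Class-estim} one has in the interior
\[
\d^{1-s}(x)\,\n u(x) \;=\; s\,\psi(x)\,\n\d(x) \;+\; \d(x)\,\n\psi(x).
\]
The left-hand side admits a continuous extension to $\ov\O$ by \eqref{eq:normal-Hold-intro2}, while $s\psi(x)\n\d(x)$ is continuous up to $\de\O$ by \eqref{eq:estim-psi-intro} and the $C^{1,1}$-regularity of $\d$. Hence the remainder $\d(x)\n\psi(x)$ also has a continuous extension to $\ov\O$, and in view of the pointwise bound $|\d(x)\n\psi(x)|\leq C\,\d^{\min(s,\a)}(x)$ from \eqref{eq:estim-Grad-ok} this extension vanishes on $\de\O$. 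Passing to the limit $x\to x_0\in\de\O$ in the identity and taking the scalar product with $\n\d(x_0)$, which is a unit vector on $\de\O$, yields \eqref{eq:norm-deriv-sleq12}.

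The only non-routine step is producing $U$ with the required properties in mere $C^{1,1}$ regularity (rather than $C^{2,\e}$); this is where the recent contribution \cite{AR19} is essential. Once $U$ is in hand, everything else is a clean application of Theorem \ref{th:Gradient-estimate-gen} together with the elementary boundary-limit argument above, and the dependence of the constant on $N,s,\b,\O,\s,\a,A_M,\|f\|_{L^\infty(\O\times[-M,M])}$ is inherited directly from that of Theorem \ref{th:Gradient-estimate-gen}, since $U$ depends only on $\O$, $N$ and $s$.
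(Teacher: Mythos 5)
Your overall strategy matches the paper's: produce the auxiliary function $U$ from \cite{AR19} and feed it into Theorem \ref{th:Gradient-estimate-gen}. But you pass off as an immediate consequence of \cite{AR19} exactly the facts that require work, and one of your intermediate claims is in fact too strong.

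What \cite{AR19} actually supplies (as recorded in the paper via \eqref{eq:prop-of-reg-dist}) is a $U\in C^s(\R^N)\cap C^2_{loc}(\O)$ satisfying \eqref{eq:torsion}, \eqref{eq:estimU}, and, for every $\e\in(0,1)$, $U^{1/s}\in C^{1,1-\e}(\ov\O)$ together with $|D^2 U^{1/s}(x)|\leq C_0\d^{-\e}(x)$. It does \emph{not} directly hand you $\Psi=U/\d^s\in C^\a(\ov\O)$ nor $\d^{1-s}\n U\in C^\g(\ov\O)$; these must be derived from the $U^{1/s}$-estimates. The derivation is the content of the proof: since $U^{1/s}$ vanishes on $\de\O$, one writes
\[
U^{1/s}(x)=\d(x)\int_0^1\n U^{1/s}\bigl(x-t\,\d(x)\,\n\d(\s(x))\bigr)\cdot\n\d(\s(x))\,dt,
\]
differentiates, and uses $|D^2U^{1/s}|\lesssim\d^{-\e}$ (with $\e=1-\a$) to get $|\n(U^{1/s}/\d)|\lesssim\d^{\a-1}$, whence $|\n\Psi|\lesssim\d^{\a-1}$, and similarly $\d^{1-s}\n U=s\,(U^{1/s}/\d)^{s-1}\n U^{1/s}\in C^\a(\ov\O)$. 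Your proposal omits all of this.

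More seriously, your claim that ``$\Psi$ is $C^1$-smooth up to the boundary, so $|\n\Psi(x)|\leq C$'' overshoots what is available. In a $C^{1,1}$ domain the paper only obtains $|\n\Psi(x)|\leq C\,\d^{\a-1}(x)$, which blows up at $\de\O$; the $C^1(\ov\O)$ regularity of $\Psi$ is what one would have in $C^{2,\e}$ domains via Grubb/Ros-Oton--Serra, not in the $C^{1,1}$ setting of this corollary. The conclusion of part (i) still goes through because $\d^{\a-1}\leq\d^{\min(s,\a)-1}$, so the correct bound $|\n\Psi|\lesssim\d^{\a-1}$ suffices when inserted into \eqref{eq:grad-estim-gen}; but as written your argument rests on an unjustified (and likely false) premise. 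Your treatment of the boundary identity \eqref{eq:norm-deriv-sleq12} is fine and fills in the detail that the paper leaves implicit.
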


\begin{remark}  In view of the above results, the following question remain open.
Our  arguments  yield a bound of $|\n\psi|$ in terms of $\d^{\min(s,\a )}$. Does the estimate $ |\n \psi |\leq C \d^{\a-1}$  hold for some $\a>s$?
\end{remark}

To prove Theorem \ref{th:Gradient-estimate-2s-geq1}, we  consider the function  $y\mapsto v_x(y):= \d^s (y) (\psi (y)-\psi(x))$, with $y\in B(x,\d(x))$. Note  that $v_x(y)=u(y)-\d^s(y)\psi(x)$ and its order of vanishing near $\de\O$ is   $\d^{s+\min(\a,s)}(x)$.  We then apply interior regularity theory to the translated and rescaled equation for $v_x$ to deduce the estimate $\|v_x\|_{C^{1,\b}( B(x,\d(x)/2) )}\leq C\d^{s+\min(\a,s)}(x) $, from which we conclude the proof. 
 In the case of Theorem \ref{th:Gradient-estimate-gen}, we adopt the same strategy as in the proof of Theorem \ref{th:Gradient-estimate-2s-geq1}. However,   since we do not know a sharp result for the H\"older continuity of    $\Ds\d^s$ in $C^{1,1}$ domains, we replace $\d^s$ with $U$ in the definition of $v_x$.  We then apply interior regularity theory and a bootstrap argument  to the translated and rescaled problem.    

\section{Proof of  the main results }\label{eq:Proofs}
 We recall the interior regularity for the fractional Laplacian for equation to $\Ds v= g$ in $B_1$ with $v\in L^\infty(\R^N)$. Then, see e.g. \cite{RS16a}, we have the following estimates with a constant $C$ depending only on $N,s$ and $\t$.
\begin{enumerate}
\item  If $2s> 1$ and $\t\in (0,2s-1)$,
\be\label{eq:Intestim2}
\|v\|_{C^{1,\t}(B_{1/2})}\leq C ( \|g\|_{L^{\infty}(B_1)}    + \|v\|_{L^\infty(B_1)} +\|v\|_{L^1_s}) .
\ee
\item  If   $2s+\t\not\in \N$,   then
\be \label{eq:Intestim1}
\|v\|_{C^{2s+\t}(B_{1/2})}\leq C ( \|g\|_{C^{\t}(B_1)}  + \|v\|_{L^\infty(B_1)}+\|v\|_{L^1_s}) .
\ee
\end{enumerate} 
Here and in the following $B_t:=B(0,t)$ denotes the centered ball of radius $t>0$ in $\R^N$  and 
\[
\|v\|_{L^1_s}=\int_{\R^N}\frac{|v(x)|}{1+|x|^{N+2s}}\ dx.
\] 

\subsection{Proof of  Theorem \ref{th:Gradient-estimate-2s-geq1}} \label{ss:Th1}

For simplicity, we will assume that $$\|u\|_{L^\infty(\R^N)}+ \|g\|_{L^\infty(\R^N)}\leq 1.$$
From now on, $C$ always denotes a positive constant depending on $N$, $\O$, $s$, $\s $, $\a$, $\b$ and $\g$, which may change from line to line. \\

Since $\O$ is of class $C^{1,\g}$, we can assume that $\d$, defined Section \ref{s:Int}, is Lipschitz continuous in $\R^N$.   Fix $x\in \O $ and for $z\in \R^N$ we define
$$ 
u_x(z):=u(x+z\d(x)), \quad  \d_x(z):=\d(x+z\d(x)),\quad  \text{and}\quad v_x(z):=u_x(z)-\d_x^s(z)\psi(x).
$$

Since $\d$ is Lipschitz continuous,   for $z\in B_{1/2}$ we have that 

\be\label{estm:distance-function}
\frac{1}{2}\d(x)\leq \d_x(z)\leq 2\d(x) \quad\text{and}\quad |\n \d_x(z)|\leq C\d(x).
\ee
Since $\psi\in C^{\a }(\overline{\O })$ and $\d^s\in C^s(\R^N)$, we have for $z\in \R^N$
\[
|v_x(z)|=\d_x^s(z)|\psi(x)-\psi(x+\delta(x)z)|\leq C\d^{s+\a}(x)|z|^{\a}(1+|z|^s)1_{\frac{\O-x}{\d(x)}}(z),
\]
where we used that $\d$ is zero on $\R^N\setminus \O $.  As a consequence, for $z\in B_1$ we have
\be\label{estm-v1}
|v_x(z)|=\d_x^s(z)|\psi(x)-\psi(x+\delta(x)z)|\leq   C\d^{s+\a }(x).
\ee
We observe that for some $R=R(\O)$, we have $\frac{\O-x}{\d(x)}\subset B_{R/\d(x)}$.
 In particular, if $\a \neq s$,  
\begin{align*}
\|v_x\|_{L^1_s}&\leq C\d^{s+\a }(x)+C\d^s(x)\int_{B_{R/\d(x)}\setminus B_1} \frac{ \d^{\a }(x)|z|^{\a }}{|z|^{N+s}}\ dz\notag\\
&\leq C\d^{s+\a }(x)+C\d^{s+\a }(x)\int_1^{R/\d(x)}  t^{\a-1-s}\ dt\notag\\
&\leq C\d^{s+\a }(x)(1+\d^{s-\a }(x)).
\end{align*} 
We then  conclude that, for $\a\not=s$, 
\be  \label{estm-v2}
\|v_x\|_{L^1_s}\leq C \d^{s+\min(s,\a )}(x).
\ee
Next, we note that by the scaling properties of the fractional Laplacian, we have for $z\in B_1$

\be\label{eq:ds-applied}
\Ds v_x(z)=\delta^{2s}(x) g(x+\delta(x)z) -\psi(x)\delta^{2s}(x)[\Ds\delta^s](x+\delta(x)z).
\ee
We now complete the proof of the theorem.
\begin{proof}[Proof of Theorem \ref{th:Gradient-estimate-2s-geq1} completed]
We start by recalling that by \cite[Proposition 2.6]{RS-1}, if $\O$ is of class $C^{1,\g}$ with  $\g>s$,  then 
$$
|\Ds\d^s(x)|\leq  C \qquad\textrm{ for all $x\in \O$.}
$$
From the assumptions on $g$,   \eqref{estm-v1}, and \eqref{estm-v2} the estimate \eqref{eq:Intestim2} applied to the equation \eqref{eq:ds-applied} gives, for $\b\in (0, 2s-1)$ and $\a\not=s$,
\begin{align} \label{eq:from-interior-reg1}
\|v_x\|_{C^{1,\b}(B_{1/4})}&\leq C\left( \d^{2s}(x)+\d^{2s}(x)\sup_{z\in B_{1/2}}|\Ds\d^s(x+\d(x)z)| +\|v_x\|_{L^{\infty}(B_{1/2})}+\|v_x\|_{L^1_s} \right) \nonumber\\
&\leq C\left(\d^{2s}(x)  +\d^{s+\min(s,\a )}(x) \right)\leq C\d^{s+\min(s,\a )}(x).
\end{align}
We then deduce from this that, for all $x\in \O$
\be \label{eq:zzz1}
|\n u(x)-\psi(x)\n \d^s(x)|=\d^{-1}(x)|\nabla v_x(0)|\leq C\d^{s+\min(s,\a )-1}(x).
\ee
 Since $\d^s(x)\n \psi (x)= \n u(x)-\psi(x)\n \d^s(x)$, we get  \eqref{eq:est-1-psi-intro}.\\
 
 We now prove \eqref{eq:cont-weight-grad-intro} and we recall our assumption that $\O$ is of class $C^{1,1}$. By  \eqref{eq:from-interior-reg1}, for $\a\not=s$, we get
\be\label{eq:Z1}
\|\n u-\psi (x)\n \d^s \| _{L^\infty(B(x,\d(x)/4))}\leq C  \d^{s+\min(s,\a )-1}(x)
\ee
and 
\be \label{eq:Z2}
\begin{split}
[\n u -\psi(x)\n \d^s]_{C^{0,\b}( B(x,\d(x)/4) )}&=\d^{-1-\b}(x) [\n u -\psi(x)\n \d^s]_{C^{0,\b}( B(0,1/4) )}\\
& \leq C\d^{s+\min(s,\a )-1-\b}(x).	
\end{split}
\ee
Define $w_x(y):=\d^{1-s}(y) \left( \n u(y)-\psi (x)\n \d^s  (y)\right)$. Then, for $y_1,y_2\in  B(x,\d(x)/4) )$ and by \eqref{eq:Z1} and \eqref{eq:Z2}, we have 
\begin{align*}
|w_x(y_1)-&w_x(y_2)  |\\
&\leq  C  \left(\d^{-s}(x) |y_1-y_2| \d^{s+\min(s,\a )-1}(x) + \d^{1-s}(x) \d^{s+\min(s,\a )-1-\b}(x) |y_1-y_2|^\b\right)\\
&\leq  C  \left( \d^{1-\b -s}(x) |y_1-y_2|^{\b} \d^{s+\min(s,\a )-1}(x) +  \d^{ \min(s,\a ) -\b}(x) |y_1-y_2|^\b\right)\\
&\leq C \d^{ \min(s,\a )-\b}(x) |y_1-y_2|^{\b} ,
\end{align*} 
 where we used that $\d\in C^1(B(x,\d(x)/4) ) )$ and \eqref{estm:distance-function}.
 Hence form this and \eqref{eq:Z1}, provided $\b\in(0,\min (\a,s,2s-1))$, we get
$$
 [w_x]_{   C^\b(B(x,\d(x)/4) ) )} \leq C  \d^{ \min(s,\a )-\b}(x) \leq C .
$$
Therefore, noticing that $\d^{1-s}(y)\n u(y)=w_x(y)-\psi(x)\n \d(y)$,  $\n \d\in C^{0,1 }(B(x,\d(x)/4) ) $  and $|\psi(x)|\leq C$, we find, for all $\b\in(0,\min ( \a,s,2s-1))$, that 
$$
 [\d^{1-s}\n u ]_{   C^\b(B(x,\d(x)/4) ) )} \leq C.
$$
It then follows from a very similar argument as  in the proof of \cite[Proposition 1.1]{{RS-2}} that $[\d^{1-s}\n u ]_{C^\b(\ov\O)}\leq C$. Since $ \|\d^{1-s}\n u \|_{L^\infty(\O)}\leq C $ by \eqref{eq:zzz1}, we get \eqref{eq:cont-weight-grad-intro}.\\

Finally, since  $\O$ is of class $C^{1,1}$, then  $\n\d\in C(\ov\O)$ and $\n\psi\in C(\O)$. Therefore  \eqref{eq:equlaity-norm-deriv}    follows from \eqref{eq:Class-estim},  \eqref{eq:est-1-psi-intro} and \eqref{eq:cont-weight-grad-intro}. The proof      is thus  complete.
\end{proof}

\subsection{Proof of Theorem \ref{th:Gradient-estimate-gen}}
To simplify the write up, we assume for the following that with $M=\|u\|_{ L^\infty(\O)}$ we have $\|f\|_{L^{\infty}({\O} \times [-M,M])}\leq 1$ and $A_M\leq 1$ (recall \eqref{eq:Assump-entries}).\\
From now on, the letter $C$ always denotes a positive constant depending on $N$, $\O$, $s$, $\s $, $\a$, $\b$, $\g$, $U$  and $M$, which may change from line to line. 

\medskip

By \eqref{eq:estimU} and \eqref{eq:estim-psi-intro}, we see that $\ov \psi=\frac{u}{U}=\frac{\psi}{\Psi} \in C^\a(\ov \O)$.
In the following, we fix $r_0\leq c^{\frac{1}{s}}$, with $c$ being the constant appearing in  \eqref{eq:estimU}.
Recalling \eqref{eq:estimU}, for  $x\in \O $ and  $z\in \R^N$,  we define
$$ 
\ov u_x(z):=u(x+zU^{\frac{1}{s} }(x)), \quad \ov  U_x(z):=U(x+zU ^{\frac{1}{s} }(x)),\quad  \text{and}\quad \ov v_x(z):=\ov u_x(z)-\ov U_x (z) \ov \psi(x).
$$
We observe that, since $\ov \psi=u/U$,  
$$
  \ov v_x(z)= \ov U_x (z)\left(  \ov\psi_x(z)- \ov \psi(x) \right).
$$
Therefore in view of \eqref{eq:estimU} and the fact that $\ov \psi\in C^\a(\ov \O)$,  by using similar argument as   in the beginning of Section \ref{ss:Th1}, we find that, provided $\a\not=s$, 
\be  \label{estm-v2-gen}
\|\ov v_x\|_{L^1_s}\leq C \d^{s+\min(s,\a )}(x)
\ee
and 
\be\label{estm-v1-gen} 
\|\ov v_x\|_{L^\infty( B_{r_0/2} )}\leq C \d^{s+\min(s,\a )}(x).
\ee
Now direct computations, based on the scaling property of the fractional Laplacian and \eqref{eq:torsion}, yield for all $r_0\leq c^{\frac{1}{s}}$ and  $z\in B_{r_0/2}$
\be\label{eq:ds-applied-gen}
\Ds \ov  v_x(z)=U^{2}(x) f\left(x+ U^{\frac{1}{s}} (x)z,\ov u_x(z)\right) - \ov \psi(x) U ^{2}(x)\Ds U(x+U^{\frac{1}{s}} (x) z ) .
\ee
We now complete the proof of the theorem.
\begin{proof}[Proof of Theorem \ref{th:Gradient-estimate-gen} completed]
We start by recalling that $2s\leq 1$.
Let 
\be\label{estm-gx0-gen}
\ov g_x(z)=f\left(x+U^{\frac{1}{s}} (x)z,\ov u_x(z) \right)=f\left(x+U^{\frac{1}{s}} (x)z, \ov v_x(z) +\ov \psi(x)\ov U_x(z) \right).
\ee
Then from the assumptions on $f$ and \eqref{eq:estimU}, we get 
\be\label{estm-gx-gen}
\|\ov g_x\|_{C^{\min(s,\s )}(B_{r_0/2})}\leq C\left( 1+ \|\ov v_x\|_{C^{s}(B_{r_0/2})}  \right).  
\ee
By \eqref{eq:torsion},  \eqref{eq:Intestim1} and  \eqref{eq:ds-applied-gen}, provided $\t_0=2s+\min(s,\s) \notin \N$,    we have
\begin{align*}
\|\ov v_x\|_{C^{\t_0}(B_{r_0/4})}&\leq  C\left( U^{2}(x) \|\ov g_x\|_{C^{\min(s,\s )}(B_{r_0/2})}+U^{2}(x) +\|\ov v_x\|_{L^{\infty}(B_{r_0/2})}+\|\ov v_x\|_{L^1_s} \right),
\end{align*}
and if $\t_0\in \N$, we can replace $\|\ov v_x\|_{C^{\t_0}(B_{r_0/4})}$ above with   $\|\ov v_x\|_{C^{\t_0-\e}(B_{r_0/4})}$   for an arbitrary small $\e>0$.
Hence using \eqref{estm-gx-gen},  \eqref{estm-v1-gen}, and \eqref{estm-v2-gen} we obtain
\begin{align}\label{eq:start-iterate}
\|\ov v_x\|_{C^{\t_0}(B_{r_0/4})} \leq C\left( \|\ov v_x\|_{C^{\min(s,\s )}(B_{r_0/2})} +1\right)\d^{s+\min(s,\a )}(x)\leq C \d^{s+\min(s,\a )}(x).
\end{align}
   We consider a sequence of numbers $r_i=c^{\frac{1}{s}} 2^{-i-2}$ and $\tau_{i+1}=\min(2s+\tau_i,\s)$ for $i\in \N$. Then by \eqref{eq:ds-applied-gen} and \eqref{estm-gx0-gen},  for all $z\in B_{r_{i}}$, $i\in \N$ we have
\[
\Ds \ov v_x(z)= U^{2}(x) \ov g_x(z) -\ov \psi(x) U ^{2}(x)\Ds U(x+U^{\frac{1}{s}} (x) z ) .
\]
Hence iterating the above argument, provided $\t_{i+1}\not\in \N$ (or else we replace $\t_{i+1}$ with  $\t_{i+1}-\e$ for an arbitrary small $\e>0$), we get
\[
\| \ov v_x\|_{C^{\t_{i+1}}(B_{r_{i+1}})}\leq C\left( \|\ov v_x\|_{C^{\t_i}(B_{r_i})} +1\right) \d^{s+\min(s,\a )}(x).
\]
Therefore by \eqref{eq:start-iterate} and the fact that $1>\s >1-2s$, we must have that  for some $i_0\in \N$,   
\be \label{eq:after-iterr}
\|\ov v_x\|_{C^{1,\s-1+2s}(B_{r_{i_0}})}\leq C \d^{s+\min(s,\a )}(x).
\ee
  This implies, in particular that
$$
|\n \ov v_x(0)|\leq C  \d^{s+\min(s,\a )}(x).\qquad\textrm{ for all $x\in \O$,}
$$
Therefore, since    by  \eqref{eq:estimU}, 
\[
|\n u(x)-\ov \psi(x) \n  U (x)|=U^{-\frac{1}{s}}(x)|\nabla \ov v_x(0)|\leq C\d^{s+\min(s,\a )-1}(x).
\]
Using that $|U \n \ov\psi|=|\n u(x)-\ov \psi(x) \n  U (x)|$ and \eqref{eq:estimU},  we then get
$$
|\n \ov \psi(x)|\leq   C\d^{\min(s,\a )-1}(x)  \qquad\textrm{ for all $x\in \O$}
$$
and thus,  since   $\ov \psi=\frac{\psi}{\Psi} $, 
$$
|\n  \psi(x)|\leq   C\left( \d^{\min(s,\a )-1}(x)+  |\n  \Psi(x)| \right)  \qquad\textrm{ for all $x\in \O$,}
$$
which is \eqref{eq:grad-estim-gen}.\\

  To see \eqref{eq:normal-Hold-intro}, we argue as in the proof of the case $2s>1$ in Section \ref{ss:Th1}. Indeed, we start  by noting that,      using  \eqref{eq:estimU} and \eqref{eq:after-iterr},  we can find a constant  $c_0=c_0(\O,N,s)>0$ such that 
$$
\|\ov v_x\|_{C^{1,\s-1+2s}( B(x,c_0\d(x)))}\leq C \d^{s+\min(s,\a )}(x),
$$
provided $\a\not=s$.
This implies that
\be \label{eq:est--1}
 \|\n u -\ov \psi(x)\n U\|_{L^{\infty}(B(x,c_0\d(x)))}\leq C \d^{s+\min(s,\a )-1}(x)
\ee
and   for all $\b\in (0,\s-1+2s]$
\be\label{eq:est--2}
 [\n u -\ov \psi(x)\n U]_{C^{\b}(B(x,c_0\d(x)))} C \d^{s+\min(s,\a )-1-\b}(x). 
\ee
To finish the proof, we proceed as in Section \ref{ss:Th1}. Define $\ov w_x=\delta^{1-s}\left(\n u -\ov \psi(x)\n U\right)$ and then, for $y_1,y_2\in B(x,c_0\d(x))$ and $\b \in(0,\min(\s -1+2s)]$ we have
\begin{align}
|\ov w_x(y_1)-& \ov w_x(y_2)  |
\leq C \d^{ \min(s,\a )-\b}(x) |y_1-y_2|^{\b} \label{eq:est--3a},
\end{align} 
with the same calculation as in the proof of Theorem \ref{th:Gradient-estimate-2s-geq1} based on \eqref{eq:est--1},  \eqref{eq:est--2} and the regularity of $\d$. Therefore, provided $\a\not=s$, we have for $\b \in(0,\min(\a ,s,\s -1+2s)]$
\[
[\ov w_x]_{C^{\beta}(B(x,c_0\d (x)))}\leq C.
\]
Next, we define $V(y):=\d^{1-s}(y)\n u(y)$, and we note that 
$$
V(y)=\ov w_x(y)+ \d^{1-s}(y)\n U(y) \ov \psi(x).
$$
By assumption $\d^{1-s}\nabla U \in C^\g(\ov\O)$ and thus  $\d \n \Psi\in L^\infty(\O)$, so that by  \eqref{eq:grad-estim-gen} and \eqref{eq:Class-estim}
\be\label{eq:L-infty-bound}
\|V \|_{L^\infty(\O)}\leq C. 
\ee
On the other hand  by \eqref{eq:est--3a}    we have 
 $$
 [ V ]_{C^\b(  B(x,c_0\d(x)))}\leq   C \qquad\textrm{ for all $\b\in (0, \min\{\g,\a ,s , \s-1+2s \}],$  }
 $$
 provided $\a\not=s$.  
Arguing similarly as in the proof of \cite[Proposition 1.1]{{RS-2}} we have $[V]_{C^\b(\ov\O)}\leq C$. This together with \eqref{eq:L-infty-bound} yield  $(ii)$.
\end{proof}

\begin{proof}[Proof of Corollary \ref{cor:sleq12} ]
First, we observe that the results hold in $\O_\b:=\{x\in \O\,:\, \d(x)\geq \b\}$ for all $\b>0$.  From now on, we fix $\b>0$ small such that any point $x\in \O\setminus \O_\b$ has a unique projection $\s(x)\in \de\O$ and that the map $x\mapsto \s(x)$ is $C^1 (\ov \O\setminus \O_\b)$.\\
By \cite{AR19}, there exists a function $U\in C^{s}(\R^N)\cap C^2_{loc}(\O)$ satisfying \eqref{eq:torsion}, \eqref{eq:estimU}  and, for all $\e\in (0,1)$, there exists a constant $C_0=C_0(s,\O,N,\e)$ such that  
\be \label{eq:prop-of-reg-dist}
 U^{\frac{1}{s}} \in C^{1,1-\e}(\ov\O) \qquad\textrm{ and }\qquad  |D^2 U^{\frac{1}{s}} (x)|\leq C_0 \d^{-\e}(x) \qquad\textrm{ for all $x\in\O $.} 
\ee
From now on, we fix $\e=1-\a\in (0,1)$. In the following the constant $C$ is as in the statement of the corollary.\\
Since $U=0$ on $\de\O$, for $x\in \O\setminus \O_\b$,  by the mean value theorem, we have 
\be \label{eq:expand-U1s}
 U^{\frac{1}{s}}(x)=\d(x)\int_0^1\n   U^{\frac{1}{s}}(x-t \d(x) \n\d(\s(x)))\cdot \n\d(\s(x))\, dt .
\ee
From this, \eqref{eq:prop-of-reg-dist}  and the fact that $\d$ is smooth on $\de\O$ (in fact it vanishes there),  we deduce that for   every $x\in \O\setminus \O_\b$,  
$$
|\n (U^{\frac{1}{s}}  /\d )|\leq  C_0  \int_0^1  \d\left(x-t \d(x)  \n\d(\s(x))\right)^{\a-1} \,dt+ C\leq C \d^{\a-1}(x).
$$
It then follows that, for   every $x\in \O\setminus \O_\b$,  
\begin{align*}
|\n \Psi(x)|=|\n (U^{\frac{1}{s}}  /\d)^s(x)|&=s (U^{\frac{1}{s}}  /\d)^{s-1} (x)|\n(  U^{\frac{1}{s}}  /\d)(x)| \leq C  \d^{\a-1}(x),
\end{align*}
where we used \eqref{eq:estimU}. Hence by  the regularity of $U$ and $\d$, we obtain
$$
|\n \Psi(x)| \leq C  \d^{\a-1}(x)\qquad\textrm{ for all $x\in \O$.}
$$
  Therefore by applying   Theorem \ref{th:Gradient-estimate-gen} $(i)$, we obtain $(i)$.\\

We now prove $(ii)$.    We start by noting that,  by \eqref{eq:prop-of-reg-dist},  \eqref{eq:expand-U1s} and \eqref{eq:estimU} we get 
\be \label{eq:classical-norm-estim}
\frac{ U^{\frac{1}{s}}}{  \d}\in C^{\a}(\ov \O\setminus \O_\b) \qquad\textrm{ and } \qquad  \frac{ U^{\frac{1}{s}}}{  \d} \geq C \qquad\textrm{ in  $\ov \O\setminus \O_\b$.}
\ee
 Direct computations yield
$$
\d^{1-s}\n U= \d^{1-s} \n ( U^{\frac{1}{s}})^s=s  \d^{1-s}( U^{\frac{1}{s}})^{s-1} \n  U^{\frac{1}{s}}=s (U^{\frac{1}{s}}  /\d)^{s-1}  \n  U^{\frac{1}{s}}.
$$
From this, \eqref{eq:classical-norm-estim}, \eqref{eq:prop-of-reg-dist} and   \eqref{eq:expand-U1s} we obtain  $\d^{1-s}\n U\in C^{\a}(\ov \O\setminus \O_\b)$.  We then conclude, from the regularity of $U$ and $\d$, that $\d^{1-s}\n U\in C^{\a}(\ov \O)$. Now by Theorem \ref{th:Gradient-estimate-gen} $(ii)$ we have $\|\d^{1-s}\n u\|_{C^\b(\ov\O)}\leq C$ and the proof of \eqref{eq:normal-Hold-intro} is complete. 

Finally \eqref{eq:norm-deriv-sleq12} follows from  \eqref{eq:Class-estim},   \eqref{eq:estim-Grad-ok} and  \eqref{eq:normal-Hold-intro}. 

\end{proof}

\end{document}